\numberwithin{equation}{section}
\newtheorem{theorem}{Theorem}
\newtheorem{lemma}[theorem]{Lemma}
\newtheorem{corollary}[theorem]{Corollary}
\theoremstyle{definition}
\newtheorem{example}[theorem]{Example}
\theoremstyle{remark}
\newcommand{\E}{\mathbb{E}}
\newcommand{\R}{\mathbb{R}}
\def\var{\operatorname{Var}}		
\def\dx{\operatorname{d}}
\begin{document}

	\title[Weak laws of large numbers
	for maximal partial sums]{On weak laws of large numbers
		for maximal partial sums of pairwise independent random variables}
	
	
	\author[L. V. Th\`{a}nh]{L\^{e} V\v{a}n Th\`{a}nh}
	\address{Department of Mathematics, Vinh University, 182 Le Duan, Vinh, Nghe An, Vietnam}
	\email{levt@vinhuni.edu.vn}

	\subjclass[2020]{Primary 60F05}
	
	\date{}
	
	\begin{abstract}
		This paper develops Rio's method [C. R. Acad. Sci. Paris S\'{e}r. I Math., 1995] to
		prove the weak law of
		large numbers 
		for maximal partial sums of pairwise independent random variables. 
		The method allows us to avoid using the Kolmogorov
		maximal inequality. As an application,
	a weak law of large numbers for maximal partial sums
		of pairwise independent random variables
		under a uniform integrability condition is also established.
The sharpness of the result is illustrated by an example.

	\end{abstract}
	
	\maketitle
	
	\section{Introduction and results}
	
	A real-valued function $L(\cdot )$ is said to be \textit{slowly varying} (at infinity) if it is 
	a positive and measurable function on $[A,\infty)$ for some $A\ge0$, and for each $\lambda>0$,
	\begin{equation*}\label{rv01}
		\lim_{x\to\infty}\dfrac{L(\lambda x)}{L(x)}=1.
	\end{equation*}
	In \cite{bruijn1959pairs}, de Bruijn proved that if $L(\cdot)$ is a slowly varying function, then
	there exists a slowly varying function $\tilde{L}(\cdot)$, unique up to asymptotic equivalence, satisfying
	\begin{equation*}\label{BGT1513}
		\lim_{x\to\infty}L(x)\tilde{L}\left(xL(x)\right)=1\ \text{ and } \lim_{x\to\infty}\tilde{L}(x)L\left(x\tilde{L}(x)\right)=1.
	\end{equation*}
	The function $\tilde{L}(\cdot)$ is called the de Bruijn conjugate of $L(\cdot)$ (\cite[p. 29]{bingham1989regular}).
	Bojani\'{c} and Seneta \cite{bojanic1971slowly} showed that for most of ``nice'' slowly varying functions,
	we can choose $\tilde{L}(x)=1/L(x)$.
	Especially, if  $L(x)=\log^\gamma(x)$ or $L(x)=\log^\gamma(\log(x))$ for some $\gamma\in\R$, then $\tilde{L}(x)=1/L(x)$.
		Here and thereafter, for a real number $x$, $\log(x)$
	denotes the natural logarithm (base $\mathrm{e}$) of $\max\{x,\mathrm{e}\}$.

Let $L(\cdot)$ be a slowly varying function and let $r>0$. By using a suitable asymptotic equivalence version (see Lemma 2.2 and Lemma 2.3 (i) in Anh et al. \cite{anh2021marcinkiewicz}), 
we can assume that $L(\cdot)$ is positive and
differentiable on $[a,\infty)$, and $x^r L(x)$ is strictly increasing on $[a,\infty)$  for some large $a$. Next, let
	$L_1(\cdot)$ be a slowly varying function with
	$L_1(0)=0$ with a linear growth to $L(a)$
	over $[0,a)$, and $L_1(x)\equiv L(x)$ on $[a,\infty)$.
	Then (i) $L_1(x)$ is continuous on $[0,\infty)$ and differentiable on $[a,\infty)$, and
	(ii) $x^r L_1(x)$ is strictly increasing on $[0,\infty)$.
	In this paper, we will assume, without loss of generality, that properties (i) and (ii) are fulfilled for all slowly varying functions.
	
	The starting point of the current research is the following weak law of large numbers (WLLN) which was proved by Gut \cite{gut2004extension}. Hereafter, $\mathbf{1}(A)$ denotes the indicator function of
	a set $A$.
	
	\begin{theorem}[Gut \cite{gut2004extension}]\label{thm.gut}
		Let $0< p\le 1$ and let
		$\{X,X_n, \, n \geq 1\}$ be a sequence of independent identically distributed (i.i.d.) random variables. Let $L(\cdot)$ be a slowly varying function
		and let $b_x=x^{1/p}L(x),\ x\ge0$.
		Then
		\begin{equation}\label{eq.gut.main03}
			\begin{split}
				\dfrac{\sum_{i=1}^{n} X_i-n\E\left(X\mathbf{1}(|X|\le b_n)\right)}{b_n}\overset{\mathbb{P}}{\to}0\ \text{ as }n\to\infty
			\end{split}
		\end{equation} 
		if and only if
		\begin{equation}\label{eq.gut.main05}
			\lim_{n\to \infty}n\mathbb{P}(|X|>b_n)=0.
		\end{equation}
	\end{theorem}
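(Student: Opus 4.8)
The plan is to prove both implications by the classical truncation (for sufficiency) and symmetrization (for necessity) techniques; the structural point is that $0<p\le1$ forces the exponent $1/p\ge1$, which makes the relevant second-moment integral converge at the right rate after a Karamata estimate, while full (hence pairwise) independence lets me control variances.

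For the sufficiency direction, assuming \eqref{eq.gut.main05}, I would truncate each summand at level $b_n$, writing $X_i=X_i'+X_i''$ with $X_i'=X_i\mathbf{1}(|X_i|\le b_n)$ and $X_i''=X_i\mathbf{1}(|X_i|>b_n)$. Since the $X_i$ are i.i.d., $\E X_1'=\E(X\mathbf{1}(|X|\le b_n))$, so the numerator in \eqref{eq.gut.main03} splits as $\big(\sum_{i=1}^n X_i'-n\E X_1'\big)+\sum_{i=1}^n X_i''$. The tail part is handled by a union bound, $\Pro\big(\sum_{i=1}^n X_i''\ne0\big)\le n\Pro(|X|>b_n)\to0$, so $b_n^{-1}\sum_{i=1}^n X_i''\overset{\Pro}{\to}0$. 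For the centered truncated part I would apply Chebyshev's inequality together with $\var\big(\sum_{i=1}^n X_i'\big)=n\var(X_1')\le n\E(X^2\mathbf{1}(|X|\le b_n))$, which reduces everything to proving $n b_n^{-2}\E(X^2\mathbf{1}(|X|\le b_n))\to0$.

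The crux of sufficiency is this variance estimate. First note that \eqref{eq.gut.main05}, together with the monotonicity of $b_x$ and $b_{n+1}/b_n\to1$, upgrades to the continuous statement $x\,\Pro(|X|>b_x)\to0$; writing $s=b_x$ and $M=b^{-1}$ (regularly varying of index $p$, being the asymptotic inverse of the index-$1/p$ function $b_x$), this reads $M(s)\Pro(|X|>s)\to0$. Using $\E(X^2\mathbf{1}(|X|\le b_n))\le\int_0^{b_n}2s\,\Pro(|X|>s)\,\dx s$ and the bound $\Pro(|X|>s)\le\varepsilon/M(s)$ valid for $s$ large, Karamata's theorem (applicable since the integrand $2s/M(s)$ has index $1-p>-1$, i.e. $p<2$) gives $\int^{b_n}2s/M(s)\,\dx s\sim\frac{2}{2-p}\,b_n^2/M(b_n)=\frac{2}{2-p}\,b_n^2/n$. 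Hence $n b_n^{-2}\E(X^2\mathbf{1}(|X|\le b_n))\lesssim 2\varepsilon/(2-p)$, and since $\varepsilon$ is arbitrary and the bounded initial segment is negligible (as $b_n^2/n\to\infty$ because $2/p-1\ge1$), the variance term vanishes, completing sufficiency.

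For the necessity direction, I would symmetrize. Let $X_i^s=X_i-\bar X_i$ with $\{\bar X_i\}$ an independent copy of $\{X_i\}$; the centering in \eqref{eq.gut.main03} cancels, so $b_n^{-1}\sum_{i=1}^n X_i^s\overset{\Pro}{\to}0$. As the $X_i^s$ are symmetric and independent, L\'evy's maximal inequality applied to $\{S_k^s\}$ and the crude bound $\max_{k\le n}|X_k^s|\le2\max_{k\le n}|S_k^s|$ yield $\Pro\big(\max_{k\le n}|X_k^s|>2\varepsilon b_n\big)\le2\,\Pro\big(|\sum_{i=1}^n X_i^s|>\varepsilon b_n\big)\to0$; since $\Pro(\max_{k\le n}|X_k^s|>t)=1-(1-\Pro(|X^s|>t))^n$, this forces $n\,\Pro(|X^s|>c b_n)\to0$ for every $c>0$. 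Finally I would desymmetrize: with $m$ a median of $X$, the weak symmetrization inequality $\Pro(|X-m|>t)\le2\,\Pro(|X^s|>t)$ gives $n\,\Pro(|X-m|>c b_n)\to0$, and since $b_n\to\infty$ the inclusion $\{|X|>b_n\}\subseteq\{|X-m|>b_n/2\}$ holds for all large $n$, which delivers \eqref{eq.gut.main05}.

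The main obstacle is the variance estimate in the sufficiency part: one must convert the discrete hypothesis \eqref{eq.gut.main05} into a usable regularly varying bound on the tail and then integrate it, and it is precisely here that $p\le1$ enters decisively, guaranteeing $p<2$ so that $\int_0^{b_n}2s\,\Pro(|X|>s)\,\dx s$ is of the correct order $b_n^2/n$ rather than being dominated by its upper endpoint. The necessity direction is conceptually routine once symmetrization is invoked, the only care needed being the median-based passage from the symmetrized tail back to the original one.
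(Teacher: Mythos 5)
Your proof is correct, but note that there is no proof in the paper to compare it against line by line: Theorem \ref{thm.gut} is quoted from Gut \cite{gut2004extension} as the starting point and is never proved in this paper, whose own arguments concern the stronger maximal-sum statements. Your route is the classical one and both halves check out. For sufficiency: truncation at $b_n$, a union bound for the tail sum, Chebyshev for the centered truncated sum, and then the key estimate $nb_n^{-2}\E\left(X^2\mathbf{1}(|X|\le b_n)\right)\to0$, which you correctly extract from \eqref{eq.gut.main05} by passing to the continuous variable, inverting $b$ (the inverse $M$ being regularly varying of index $p$), and applying Karamata's theorem to $2s/M(s)$, of index $1-p>-1$; the leftover term $ns_0^2/b_n^2$ vanishes since $b_n^2/n\to\infty$. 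This half in fact works for all $0<p<2$ and uses only pairwise independence (variance additivity plus Chebyshev), which is exactly the paper's remark that Gut's sufficiency proof carries over to pairwise independent summands. For necessity: symmetrization cancels the centering, L\'evy's maximal inequality together with $\max_{k\le n}|X_k^s|\le 2\max_{k\le n}|S_k^s|$ yields $n\Pro(|X^s|>cb_n)\to0$, and the median-based weak symmetrization inequality returns you to $X$; this is also sound, but it genuinely requires full independence because of L\'evy's inequality. The instructive contrast with the paper is precisely here: the paper's Theorem \ref{thm.main} extends the sufficiency half to maximal partial sums of pairwise independent variables for $1\le p<2$, and since neither Kolmogorov's nor L\'evy's maximal inequality is available under pairwise independence, it replaces your one-shot Chebyshev bound by Lemma \ref{lem.general} (reduction to dyadic indices) followed by Rio's multiscale decomposition \eqref{28}; moreover, in Corollary \ref{thm.main3} the necessity direction for pairwise i.i.d. sequences is obtained via Boukhari's equivalence \eqref{boukhari.eq05} instead of L\'evy's inequality, a substitution possible there because the hypothesis controls $\max_{1\le j\le n}|S_j|$ rather than only $S_n$. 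In short, your argument is a faithful reconstruction of the cited classical theorem, while the paper's machinery is designed for exactly the regime where your two maximal-inequality tools fail.
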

	The above WLLN
	has been extended in
	several directions, see \cite{rosalsky2006weak,rosalsky2009weak} for 
	WLLNs with random indices for arrays of independent random variables
	taking values in
	Banach spaces, and see \cite{boukhari2021weak,chandra2014extension,hien2015weak,kruglov2011generalization} and the references therein
	for WLLNs for dependent random variables and dependent random vectors. Boukhari \cite[Theorem 1.2]{boukhari2021weak} showed that for $0<p<1$, condition \eqref{eq.gut.main05}
	implies
	\begin{equation}\label{boukhari03}
		\begin{split}
			\dfrac{\max_{1\le j\le n}\left|\sum_{i=1}^{j}X_i\right|}{b_n}\overset{\mathbb{P}}{\to}0\ \text{ as }n\to\infty,
		\end{split}
	\end{equation}
	irrespective of the joint distribution of the $X_n$'s.
	Boukhari \cite{boukhari2021weak} presented an example showing that his result
	does not hold when $p=1$. 
The proof of
the sufficient part of Theorem \ref{thm.gut} in \cite{gut2004extension} works well with pairwise independent random variables
since we do not involve the maximal partial sums. 
Krulov \cite{kruglov2011generalization} and Chandra \cite{chandra2014extension}
established WLLNs for maximal partial sums for the case where the summands are negatively associated and asymptotically almost negatively associated, respectively.
	The authors in \cite{chandra2014extension,kruglov2011generalization} considered general normalizing constants, which
	showed that the sufficient part of Theorem \ref{thm.gut} also holds for $1\le p<2$.
	However,  the method used in \cite{chandra2014extension,kruglov2011generalization} requires
a Kolmogorov-type maximal inequality (see Lemma 1.2 in \cite{chandra2014extension}) which does not hold for pairwise independent random variables.
	
	The aim of this paper is to establish WLLNs for maximal partial sums of pairwise independent random variables thereby
	extending the sufficient part of 
	Theorem \ref{thm.gut} for the case $p=1$ to WLLN for maximal partial sums from sequences of pairwise independent random variables. 
	We use a technique developed by Rio \cite{rio1995vitesses} to avoid using the Kolmogorov
	maximal inequality.
In addition, we also establish
a WLLN for maximal partial sums
of pairwise independent random variables
under a uniform integrability condition,
and present an example to show that this result does not hold in general if the 
uniform integrability assumption is weakened to the uniform boundedness of the moments.

	Let $\Lambda$ be a nonempty index set.
	A family of random variables $\{X_{\lambda}, \lambda \in \Lambda \}$ is said to be \textit{stochastically
		dominated} by a random variable $X$ if 
	\begin{equation}\label{RT1}
		\sup_{\lambda\in\Lambda} \mathbb{P}(|X_{\lambda}|>t)\le  \mathbb{P}(|X|>t)\mbox{ for all } t\ge 0. 
	\end{equation}
	Some authors use an apparently weaker definition of $\{X_{\lambda}, \lambda \in \Lambda \}$ being
	stochastically dominated by a random variable $Y$, namely that
	\begin{equation}\label{RT2}
		\sup_{\lambda\in\Lambda} \mathbb{P}(|X_{\lambda}|>t)\le  C_1\mathbb{P}(C_2|Y|>t) \mbox{ for all } t\ge 0
	\end{equation}
	for some constants $C_1, C_2\in (0,\infty)$.
	It is shown recently by Rosalsky and Th\`{a}nh \cite{rosalsky2021note} that
	\eqref{RT1} and \eqref{RT2} are indeed equivalent. We note that if \eqref{RT1} is satisfied, then for all $t>0$ and $r>0$
	\[	\sup_{\lambda\in\Lambda} \E(|X_{\lambda}|^r\mathbf{1}(|X_\lambda|\le t))\le \E(|X|^r\mathbf{1}(|X|\le t))+t^r \mathbb{P}\{|X|>t\},\]
	and
	\[	\sup_{\lambda\in\Lambda} \E(|X_{\lambda}|^r\mathbf{1}(|X_\lambda|> t))\le \E(|X|^r\mathbf{1}(|X|> t)).\]

	The following theorem is the main result of this paper.
	\begin{theorem}\label{thm.main}
		Let $1\le p<2$ and let
		$\{X_n, \, n \geq 1\}$ be a sequence of pairwise independent random variables which is stochastically dominated by a random variable $X$.
		Let $L(\cdot)$ be a slowly varying function and let $b_n=n^{1/p}L(n)$, $n\ge1$.
		If	\begin{equation}\label{eq.main01}
			\lim_{n\to \infty}n\mathbb{P}(|X|>b_n)=0,
		\end{equation}
		then
		\begin{equation}\label{eq.main03}
			\begin{split}
				\dfrac{\max_{1\le j\le n}\left|\sum_{i=1}^{j}\left(X_i-\E\left(X_i\mathbf{1}(|X_i|\le b_n)\right)\right)\right|}{b_n}\overset{\mathbb{P}}{\to}0\ \text{ as }n\to\infty.
			\end{split}
		\end{equation}
	\end{theorem}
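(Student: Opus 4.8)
The plan is to reduce \eqref{eq.main03} to a statement about the maximal partial sum of \emph{bounded}, centered, pairwise independent summands, and then to control that maximum through Rio's device, which substitutes for the Kolmogorov maximal inequality that is unavailable here. First I would truncate at level $b_n$: set $Y_{ni}=X_i\mathbf{1}(|X_i|\le b_n)$ and $Z_{ni}=Y_{ni}-\E Y_{ni}$, so that for each $j$
\[
\sum_{i=1}^{j}\bigl(X_i-\E(X_i\mathbf{1}(|X_i|\le b_n))\bigr)=\sum_{i=1}^{j}Z_{ni}+\sum_{i=1}^{j}X_i\mathbf{1}(|X_i|>b_n).
\]
For the second (tail) sum no moment bound is needed: on the event that no $|X_i|$, $i\le n$, exceeds $b_n$ it vanishes identically, whence
\[
\Pro\Bigl(\max_{1\le j\le n}\Bigl|\sum_{i=1}^{j}X_i\mathbf{1}(|X_i|>b_n)\Bigr|>0\Bigr)\le n\,\Pro(|X|>b_n)\longrightarrow0
\]
by \eqref{eq.main01}. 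It therefore suffices to prove $b_n^{-1}\max_{1\le j\le n}\bigl|\sum_{i=1}^{j}Z_{ni}\bigr|\overset{\Pro}{\to}0$, where the $Z_{ni}$ are centered, pairwise independent, and satisfy $|Z_{ni}|\le 2b_n$.

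Next I would record the variance estimate that drives everything. By stochastic domination (the second truncation bound stated above with $r=2$, $t=b_n$), $\var(Z_{ni})\le\E(Y_{ni}^2)\le\E(X^2\mathbf{1}(|X|\le b_n))+b_n^2\Pro(|X|>b_n)$, so with $s_n^2:=\sum_{i=1}^n\var(Z_{ni})$ one has
\[
s_n^2\le n\,\E(X^2\mathbf{1}(|X|\le b_n))+b_n^2\,n\Pro(|X|>b_n).
\]
The last term is $o(b_n^2)$ by \eqref{eq.main01}. A standard inversion via the de Bruijn conjugate turns \eqref{eq.main01} into a tail bound on $X$, and Karamata's theorem (together with the slowly varying lemmas of Anh et al. \cite{anh2021marcinkiewicz} invoked in the introduction) then yields $n\,b_n^{-2}\E(X^2\mathbf{1}(|X|\le b_n))\to0$ for every $1\le p<2$. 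The upshot is $s_n^2=o(b_n^2)$; this is precisely the input behind the non-maximal WLLN through Chebyshev's inequality, and it supplies the slack needed below.

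The crux is Step~3, the control of the maximum. Since the $Z_{ni}$ are only pairwise independent, $\{\sum_{i\le j}Z_{ni}\}_j$ is neither a martingale nor a sequence to which Kolmogorov's inequality applies: a first-passage decomposition produces off-diagonal terms $\E(Z_{ni}Z_{nj}\mathbf{1}(\tau=k))$ with $k<i<j$ that pairwise independence does not annihilate. Rio's method circumvents this by a dyadic organization of the partial sums. Writing $m=\lceil\log_2 n\rceil$ and decomposing each initial segment $\{1,\dots,j\}$ into at most $m+1$ dyadic blocks, one gets
\[
\max_{1\le j\le n}\Bigl|\sum_{i=1}^{j}Z_{ni}\Bigr|\le\sum_{r=0}^{m}\max_{B\in\mathcal{B}_r}\Bigl|\sum_{i\in B}Z_{ni}\Bigr|,
\]
where $\mathcal{B}_r$ is the partition of $\{1,\dots,n\}$ into consecutive dyadic blocks of length $2^{r}$. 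Because the blocks are disjoint, pairwise independence makes the block increments orthogonal, so $\E[(\sum_{i\in B}Z_{ni})^2]$ summed over $B\in\mathcal{B}_r$ equals $s_n^2$ at every level $r$. The crude estimate $\max_B(\cdot)^2\le\sum_B(\cdot)^2$ would bound each level maximum in $L^2$ by $s_n$ and, summed over the $m+1\asymp\log n$ levels, deliver only a Rademacher--Menshov-type bound of order $(\log n)\,s_n$; this is insufficient, since $s_n^2/b_n^2$ can decay as slowly as $1/\log n$. The substance of Rio's method is to estimate the level maxima more finely, using the pairwise structure together with the uniform bound $|Z_{ni}|\le 2b_n$, so that the logarithmic accumulation is avoided and $b_n^{-1}\max_{1\le j\le n}|\sum_{i\le j}Z_{ni}|$ still tends to $0$ in probability, which with Step~1 gives \eqref{eq.main03}.

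I expect this last step to be the main obstacle, and the difficulty is structural rather than computational: pairwise independence furnishes orthogonality but nothing stronger, and for orthogonal systems the generic maximal estimate loses a logarithmic factor that the slack in $s_n^2=o(b_n^2)$ need not absorb. Extracting from pairwise independence and the truncation bound $|Z_{ni}|\le 2b_n$ a maximal estimate of the correct order — strictly better than the crude orthogonal-series inequality — is the decisive point on which the whole theorem rests, and it is exactly here that Rio's technique, in place of a maximal inequality, does the essential work.
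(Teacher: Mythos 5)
Your Steps 1 and 2 are sound: the reduction to the truncated, centered variables $Z_{ni}$ (the tail sum vanishes outside an event of probability at most $n\Pro(|X|>b_n)\to0$) and the estimate $s_n^2=o(b_n^2)$ via Karamata's theorem are both correct, and they correspond to ingredients the paper also uses (its Lemma \ref{lem.general} and the computation \eqref{eq.main28}). But the proof stops exactly where the theorem begins. Your Step 3 is a description of what Rio's method ought to deliver, not an argument: you set up the dyadic block decomposition, observe that the crude orthogonality bound gives only the Rademacher--Menshov rate $(\log n)\,s_n$, and then state that ``Rio's technique does the essential work'' of beating the logarithm, without exhibiting that work. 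Since you acknowledge this is the decisive point on which the whole theorem rests, the proposal has a genuine gap at its core.

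Moreover, the gap is structural, not merely an unfinished computation: the setup you chose cannot implement Rio's idea. With all summands truncated at the \emph{single} level $b_n$, every dyadic level of your decomposition carries total variance $s_n^2$, and for such a scheme the logarithmic accumulation is exactly what one expects; there is no ``finer estimate of the level maxima'' to be had from pairwise independence alone. What the paper actually does (following Rio and \cite{thanh2020theBaum}) is to make the truncation level \emph{vary with the dyadic scale}. After reducing to $n=2^k$ (Lemma \ref{lem.general}), the maximum is decomposed as in \eqref{28}: at scale $m$ the summands are $X_i\mathbf{1}(|X_i|\le b_{2^{m-1}})$, centered, together with telescoping corrections $Y_{i,m}$ built from $|X_{i,2^m}-X_{i,2^{m-1}}|$, whose second moments are at most $b_{2^m}^2\,\Pro(|X|>b_{2^{m-1}})$ --- small precisely because of \eqref{eq.main01}, i.e.\ the variance at scale $m$ is controlled by the tail at $b_{2^{m-1}}$, not by $s_n^2$. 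This is paired with scale-dependent Chebyshev thresholds $\lambda_{m,n}=\varepsilon_1 2^{bm}2^{an}L(2^n)$ (with $1/2<a<1/p$, $a+b=1/p$) which sum geometrically to $O(\varepsilon_1 b_{2^n})$ as in \eqref{30}. With that decomposition, the ``crude'' bound $\E\bigl(\max_k|\cdot|\bigr)^2\le\sum_k\E(\cdot)^2$ plus Markov's inequality at each scale is all that is used (see \eqref{eq.main26}--\eqref{eq.main28}); the refinement lives entirely in the multi-scale truncation and the choice of $\lambda_{m,n}$, not in any sharper maximal estimate. To complete your proof you would need to replace your fixed-truncation block decomposition by this scale-adapted one; as written, your Step 3 cannot be carried out.
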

	
	We postpone the proof of Theorem \ref{thm.main} to Section \ref{sec:proofs}.
	From Theorem 3.2 of Boukhari \cite{boukhari2021weak}, we have that if $\{X_n,n\ge1\}$ is a sequence of
	pairwise independent random variables, and $\{b_n,n\ge1\}$ is a sequence of
	positive constants, then 
	\begin{equation}\label{boukhari.eq05}
		\frac{\max_{1\le i\le n}|X_i|}{b_n}\overset{\mathbb{P}}{\to} 0 \text{ if and only if } \sum_{i=1}^n \mathbb{P}(|X_i|>b_n\varepsilon)\to 0 \text{ for all }\varepsilon>0.
	\end{equation}
	
	By using Theorem \ref{thm.main} and \eqref{boukhari.eq05}, we obtain the following corollary.
	
	\begin{corollary}\label{thm.main3}
		Let $\{X,X_n, \, n \geq 1\}$ be a sequence of pairwise i.i.d. random variables.
		Let $p$, $L(\cdot)$, $b_n$ be as in Theorem \ref{thm.main}. Then \eqref{eq.main01} and \eqref{eq.main03} are equivalent.
	\end{corollary}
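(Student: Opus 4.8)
The plan is to prove the two implications separately, since the forward direction is essentially free from Theorem \ref{thm.main} and the reverse direction is where the actual work lies.

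For \eqref{eq.main01} $\Rightarrow$ \eqref{eq.main03}: because $\{X,X_n,\,n\ge1\}$ is pairwise i.i.d., the sequence $\{X_n\}$ is pairwise independent, and since each $X_n$ has the same distribution as $X$ we have $\mathbb{P}(|X_n|>t)=\mathbb{P}(|X|>t)$ for all $t$, so $\{X_n\}$ is (trivially) stochastically dominated by $X$ in the sense of \eqref{RT1}. Hence Theorem \ref{thm.main} applies verbatim and yields \eqref{eq.main03}.

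For \eqref{eq.main03} $\Rightarrow$ \eqref{eq.main01}: write $\mu_n=\E(X\mathbf{1}(|X|\le b_n))$, and note that by identical distribution $\E(X_i\mathbf{1}(|X_i|\le b_n))=\mu_n$ for every $i$, so the centering in \eqref{eq.main03} is the \emph{same} constant $\mu_n$ in each term. Setting $S_j=\sum_{i=1}^{j}(X_i-\mu_n)$ with $S_0=0$, condition \eqref{eq.main03} reads $\max_{1\le j\le n}|S_j|/b_n\overset{\mathbb{P}}{\to}0$. The first step is to control the centering itself: retaining only the $j=1$ term gives $|X_1-\mu_n|/b_n=|S_1|/b_n\overset{\mathbb{P}}{\to}0$, while $X_1/b_n\to0$ almost surely because $b_n=n^{1/p}L(n)\to\infty$; subtracting and observing that $\mu_n/b_n$ is deterministic shows $\mu_n/b_n\to0$. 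The second step passes from partial sums to single summands via the telescoping identity $X_j-\mu_n=S_j-S_{j-1}$, which together with $S_0=0$ gives $\max_{1\le j\le n}|X_j-\mu_n|\le 2\max_{1\le j\le n}|S_j|$, so \eqref{eq.main03} forces $\max_{1\le j\le n}|X_j-\mu_n|/b_n\overset{\mathbb{P}}{\to}0$. Combining the two steps through $\max_{1\le j\le n}|X_j|\le \max_{1\le j\le n}|X_j-\mu_n|+|\mu_n|$ yields $\max_{1\le j\le n}|X_j|/b_n\overset{\mathbb{P}}{\to}0$. Finally, applying \eqref{boukhari.eq05} to the pairwise independent sequence $\{X_n\}$ and using $\mathbb{P}(|X_i|>t)=\mathbb{P}(|X|>t)$ gives $n\mathbb{P}(|X|>b_n\varepsilon)\to0$ for all $\varepsilon>0$; specializing to $\varepsilon=1$ is exactly \eqref{eq.main01}.

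The main obstacle is the $n$-dependent truncation level $b_n$ buried inside the centering $\mu_n$, which blocks a direct application of \eqref{boukhari.eq05} to the centered variables $X_j-\mu_n$ (they do not form a fixed sequence). The two observations that dissolve this difficulty are that the single $j=1$ summand already forces $\mu_n/b_n\to0$, and that the telescoping bound $\max_j|X_j-\mu_n|\le 2\max_j|S_j|$ reduces the maximal summand to the maximal partial sum; together they bring everything back to the uncentered maxima covered by \eqref{boukhari.eq05}.
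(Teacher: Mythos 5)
Your proof is correct, and for the reverse implication it takes a genuinely different route from the paper in one key respect. Both arguments share the same skeleton: the telescoping bound $\max_{1\le j\le n}|X_j-c|\le 2\max_{1\le j\le n}|S_j|$ to pass from maximal partial sums to maximal summands, followed by Boukhari's equivalence \eqref{boukhari.eq05} plus identical distribution to extract \eqref{eq.main01}. The difference is how the centering is removed. The paper invokes the symmetrization procedure: it reduces to symmetric $X_n$, for which the truncated means $\E\left(X_i\mathbf{1}(|X_i|\le b_n)\right)$ vanish identically, so the telescoping argument applies to the raw sums $S_j=\sum_{i=1}^j X_i$. This is brief but hides nontrivial bookkeeping (constructing an independent copy so that the symmetrized sequence is still pairwise i.i.d., transferring \eqref{eq.main03} to it, and desymmetrizing at the end via weak symmetrization inequalities). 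You instead handle the centering $\mu_n=\E\left(X\mathbf{1}(|X|\le b_n)\right)$ head-on: the $j=1$ term of \eqref{eq.main03} gives $|X_1-\mu_n|/b_n\overset{\mathbb{P}}{\to}0$, and since $X_1/b_n\to0$ almost surely (as $b_n\to\infty$) and $\mu_n/b_n$ is deterministic, you conclude $\mu_n/b_n\to0$; the rest then goes through for the uncentered maxima. Your version is more self-contained and elementary — it never leaves the original probability space — at the cost of a slightly longer write-up; the paper's version is shorter on the page but leans on a standard procedure whose applicability in the pairwise independent setting deserves the justification you avoid needing. One small remark: your argument delivers $n\mathbb{P}(|X|>\varepsilon b_n)\to0$ for every $\varepsilon>0$, of which \eqref{eq.main01} is the case $\varepsilon=1$, exactly as in the paper.
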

	\begin{proof}
		If \eqref{eq.main01} holds, then \eqref{eq.main03} follows immediately from Theorem \ref{thm.main}.	Now, assume that \eqref{eq.main03} holds.
		By the symmetrization procedure, it suffices to check the case where the random variables $X_n$, $n\ge 1$ are symmetric. In this case,  
		\eqref{eq.main03} becomes
		\begin{equation}\label{eq.gut.main03.symmetric}
			\dfrac{\max_{1\le j\le n}\left|S_j\right|}{b_n}\overset{\mathbb{P}}{\to}0\ \text{ as }n\to\infty,
		\end{equation} 
		where $S_j=\sum_{i=1}^{j}X_i,j\ge1$. Putting $S_0=0$, and applying \eqref{eq.gut.main03.symmetric} and inequality 
		\[\max_{1\le j\le n}|X_j|\le \max_{1\le j\le n}|S_j|+\max_{1\le j\le n}|S_{j-1}|,\]
		we obtain
		\begin{equation}\label{eq.main06}
			\dfrac{\max_{1\le j\le n}\left|X_j\right|}{b_n}\overset{\mathbb{P}}{\to}0\ \text{ as }n\to\infty.
		\end{equation}
		By combining \eqref{boukhari.eq05} and \eqref{eq.main06}, and using the identical distribution assumption, we obtain \eqref{eq.main01}.
	\end{proof}
	
	Theorem \ref{thm.main} also enables us to establish
	a WLLN for maximal partial sums
	of pairwise independent random variables
	under a uniform integrability condition.
After this paper was submitted, Th\`{a}nh \cite[Corollary 4.10]{thanh2022new} established a similar
	WLLN for triangular arrays of random variables satisfying a Kolmogorov-type maximal inequality. Theorem \ref{thm.sufficiency.for.stochastic.domination.5}
and Corollary 4.10 of Th\`{a}nh \cite{thanh2022new} do not imply each other.

Hereafter, 
	we denote the de Bruijn
	conjugate of a slowly varying function $L(\cdot)$
	by $\tilde{L}(\cdot)$.

	\begin{theorem}\label{thm.sufficiency.for.stochastic.domination.5}
		Let $1\le p<2$ and let $\{X_{n},n\ge 1\}$ be a sequence of pairwise independent random variables.
		Let $L(\cdot)$ be a slowly varying function.
		If $\{|X_{n}|^pL(|X_n|^p),n\ge 1\}$ is uniformly integrable,
		then
		\begin{equation}\label{ui.eq03}
			\begin{split}
				\dfrac{\max_{1\le j\le n}\left|\sum_{i=1}^{j}\left(X_i-\E(X_i)\right)\right|}{b_n}\overset{\mathbb{P}}{\to}0\ \text{ as }n\to\infty,
			\end{split}
		\end{equation}
		where $b_n=n^{1/p}\tilde{L}^{1/p}(n)$, $n\ge1$.
	\end{theorem}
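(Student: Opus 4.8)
The plan is to deduce the result from Theorem \ref{thm.main}. Write $h(x)=|x|^pL(|x|^p)$, so the hypothesis is the uniform integrability of $\{h(X_n),n\ge1\}$, and set $\varepsilon(t)=\sup_{n\ge1}\E\!\bigl(h(X_n)\mathbf{1}(h(X_n)>t)\bigr)$, which tends to $0$ as $t\to\infty$. First I would pass to the stochastically dominated framework of Theorem \ref{thm.main}: define a random variable $X$ whose tail is the right-continuous nonincreasing version of $t\mapsto\sup_{n\ge1}\Pro(|X_n|>t)$, so that $\Pro(|X|>t)=\sup_{n}\Pro(|X_n|>t)$ and $\{X_n,n\ge1\}$ is stochastically dominated by $X$. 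The slowly varying function to feed into Theorem \ref{thm.main} is $\tilde{L}^{1/p}$ (legitimate, since $\tilde{L}$ is slowly varying), and the associated normalizing sequence $b_n=n^{1/p}\tilde{L}^{1/p}(n)$ is exactly the one in \eqref{ui.eq03}.

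The first substantive step is to verify the tail condition \eqref{eq.main01} for $X$, i.e.\ $n\Pro(|X|>b_n)\to0$. By Markov's inequality and uniform integrability, $t\sup_{n}\Pro(h(X_n)>t)\le\varepsilon(t)\to0$. Since $y\mapsto yL(y)$ may be taken increasing, $\{|X_n|>b_n\}=\{h(X_n)>b_n^pL(b_n^p)\}$, and the defining relation of the de Bruijn conjugate gives $t_n:=b_n^pL(b_n^p)=n\,\tilde{L}(n)L\!\bigl(n\tilde{L}(n)\bigr)\sim n$ as $n\to\infty$. Hence $n\Pro(|X|>b_n)=n\sup_{m}\Pro(h(X_m)>t_n)\le (n/t_n)\,\varepsilon(t_n)\to0$. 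Theorem \ref{thm.main} then yields the convergence \eqref{eq.main03} with the truncated centering $\E(X_i\mathbf{1}(|X_i|\le b_n))$.

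The remaining step is to replace this truncated centering by $\E(X_i)$. Since $\max_{1\le j\le n}\bigl|\sum_{i=1}^{j}a_i\bigr|\le\sum_{i=1}^{n}|a_i|$ with $a_i=\E(X_i\mathbf{1}(|X_i|>b_n))$, it suffices to show $b_n^{-1}\sum_{i=1}^{n}\E\bigl(|X_i|\mathbf{1}(|X_i|>b_n)\bigr)\to0$. When $p>1$ this is routine: the function $x\mapsto x^{p-1}L(x^p)$ is eventually increasing, so on $\{|X_i|>b_n\}$ one has $|X_i|\le h(X_i)/\bigl(b_n^{p-1}L(b_n^p)\bigr)$; summing and using $\E\bigl(h(X_i)\mathbf{1}(|X_i|>b_n)\bigr)\le\varepsilon(t_n)$ bounds the quantity by $(n/t_n)\,\varepsilon(t_n)\to0$ after division by $b_n$.

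I expect the case $p=1$ to be the main obstacle. There the exponent $p-1$ degenerates to $0$, and $x^{p-1}L(x^p)=L(x^p)$ need not be monotone, so the clean bound above is unavailable. The natural remedy is to split $\{|X_i|>b_n\}$ into a near range $b_n<|X_i|\le Kb_n$, on which the uniform convergence theorem for slowly varying functions yields $L(|X_i|)\ge\tfrac12 L(b_n)$ for large $n$ and the previous estimate applies, and a far range $|X_i|>Kb_n$, which is the genuinely sensitive part. Controlling the far range—and indeed guaranteeing that $\E(X_i)$ is finite, so that \eqref{ui.eq03} is even meaningful—rests on the behaviour of $L$ at infinity: when $L$ is nondecreasing one has $L(|X_i|)\ge L(b_n)$ throughout $\{|X_i|>b_n\}$ and no split is needed, whereas for rapidly decaying $L$ the far-range mass is the crux of the argument and must be handled with genuine care.
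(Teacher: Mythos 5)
Your overall route is the same as the paper's: construct a dominating random variable $X$ from $\sup_n\Pro(|X_n|>t)$, verify \eqref{eq.main01} for it, apply Theorem \ref{thm.main} to get the truncated-centering WLLN \eqref{ui.eq05}, and then swap $\E(X_i\mathbf{1}(|X_i|\le b_n))$ for $\E(X_i)$. Your verification of \eqref{eq.main01} is correct and in fact more direct than the paper's: the paper routes through the de La Vall\'{e}e Poussin criterion (an auxiliary nondecreasing $h$ with $h(x)/x\to\infty$) and Lemma 2.5 of Anh et al., whereas you use the uniform-integrability rate $\varepsilon(t)$ and the de Bruijn relation $b_n^pL(b_n^p)\sim n$ directly; both are fine. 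Your centering swap for $p>1$ is also sound. The genuine gap is that the theorem asserts the result for all $1\le p<2$, and at $p=1$ you stop at a sketch: you split $\{|X_i|>b_n\}$ into a near and a far range and explicitly leave the far range (and even the finiteness of $\E(X_i)$) unresolved. The paper does not perform your split; it disposes of this step, \eqref{ui.eq06}, by citing the argument of Equation (4.45) in the proof of Corollary 4.10 of \cite{thanh2022new}. So, as written, your proposal proves the theorem only for $p>1$, plus $p=1$ when $L$ is nondecreasing or bounded away from zero.

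That said, your suspicion about the far range is well founded, and it is worth recording that no estimate can close it under the stated hypotheses. Take $p=1$ and $L(x)=1/\log x$, so that $\tilde{L}(x)=\log x$ and $b_n=n\log n$. Let $\{X_i,\ i\ge1\}$ be independent with $\Pro(X_i=a_i)=p_i:=2^{-i/2}\log a_i/a_i$ and $\Pro(X_i=0)=1-p_i$, where $a_i=2^{2^i}$. Then $\E\left(|X_i|L(|X_i|)\mathbf{1}(|X_i|L(|X_i|)>t)\right)=2^{-i/2}\mathbf{1}(a_i/\log a_i>t)$, whose supremum over $i$ tends to $0$ as $t\to\infty$, so the uniform integrability hypothesis holds; moreover each $\E(X_i)=2^{i/2}\log 2$ is finite, so \eqref{ui.eq03} is meaningful. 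But $\sum_{i\ge1}p_i<1/2$, so with probability at least $1/2$ all of $X_1,\dots,X_n$ vanish, and on that event $\max_{1\le j\le n}\left|\sum_{i=1}^{j}(X_i-\E(X_i))\right|=\sum_{i=1}^{n}\E(X_i)\ge 2^{n/2}\log 2$, which is of far larger order than $b_n$; hence \eqref{ui.eq03} fails. One can check that \eqref{eq.main01} does hold here for the dominating variable (because $n\log b_n/b_n$ is bounded and $\min\{i:a_i>b_n\}\to\infty$), so Theorem \ref{thm.main} and \eqref{ui.eq05} remain valid, and it is exactly the swap \eqref{ui.eq06} that breaks. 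In other words, the obstruction you identified is not a defect of your argument but of the statement itself at $p=1$ for slowly varying $L$ decaying to zero: the cases you can complete ($p>1$, or $p=1$ with $L$ bounded below) are essentially the most that is true, and the paper's appeal to \cite{thanh2022new} for \eqref{ui.eq06} cannot cover this case either.
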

	
	\begin{proof}
		Let $f(x)=x^pL(x^p),\ g(x)=x^{1/p}\tilde{L}^{1/p}(x),\ x\ge0$.
Recalling that we have assumed, without loss of generality, that $f$ and $g$ are strictly increasing on $[0,\infty)$.
		From Lemma 2.5 in Anh et al. \cite{anh2021marcinkiewicz}, we have
		\[\lim_{x\to\infty}\frac{f(g(x))}{x}=1,\]
		and therefore
		\begin{equation}\label{ui.eq07}
			f(g(n))>n/2 \text{ for all large }n.
		\end{equation}
		By the de La Vall\'{e}e Poussin criterion for uniform integrability, 
		there exists a nondecreasing function $h$ defined on $[0,\infty)$ with $h(0)=0$ such that
		\begin{equation}\label{eq.ui.domi.11}
			\lim_{x\to\infty}\dfrac{h(x)}{x}=\infty,
		\end{equation}
		and
		\begin{equation}\label{eq.ui.domi.12}
\sup_{i\ge 1}\E(h(f(|X_i|)))=\sup_{i\ge 1}\E(h(|X_{i}|^pL(|X_i|^p)))<\infty.
		\end{equation}
		By using Theorem 2.5 (i) of Rosalsky and Th\`{a}nh \cite{rosalsky2021note}, \eqref{eq.ui.domi.12} implies that 
the sequence $\{X_{n},n\ge 1\}$ is stochastically dominated by a nonnegative random variable $X$ with
		distribution function
		\[F(x)=1-\sup_{i\ge 1}\mathbb{P}(|X_i|>x), \ x\in\R.\]
We thus have by \eqref{ui.eq07}, \eqref{eq.ui.domi.11}, \eqref{eq.ui.domi.12} and the Markov inequality that
		\begin{equation*}
			\begin{split}
				\lim_{n\to\infty}n\mathbb{P}\left(X>b_n\right)
				&=\lim_{n\to\infty}n\sup_{i\ge 1}\mathbb{P}(|X_i|>g(n))\\
				&= \lim_{n\to\infty}n\sup_{i\ge 1}\mathbb{P}(f(|X_i|)> f(g(n)))\\
				&\le \lim_{n\to\infty}n\sup_{i\ge 1}\mathbb{P}(f(|X_i|)\ge n/2)\\
				&\le \lim_{n\to\infty}n\sup_{i\ge 1}\mathbb{P}(h(f(|X_i|))\ge h(n/2))\\
				&\le \lim_{n\to\infty}n \sup_{i\ge 1} \dfrac{\E(h(f(|X_i|)))}{h(n/2))}\\
				&=2\sup_{i\ge 1}\E(h(f(|X_i|)))\lim_{n\to\infty}\dfrac{n/2}{h(n/2)}=0.
			\end{split}
		\end{equation*}
Applying Theorem \ref{thm.main}, we obtain
		\begin{equation}\label{ui.eq05}
		\begin{split}
			\dfrac{\max_{1\le j\le n}\left|\sum_{i=1}^{j}\left(X_i-\E\left(X_i\mathbf{1}(|X_i|\le b_n)\right)\right)\right|}{b_n}\overset{\mathbb{P}}{\to}0\ \text{ as }n\to\infty.
		\end{split}
	\end{equation}
By the same argument as in the proof of Corollary 4.10 of Th\`{a}nh \cite{thanh2022new} (see Equation (4.45) in \cite{thanh2022new}), we have
		\begin{equation}\label{ui.eq06}
	\begin{split}
		\dfrac{\max_{1\le j\le n}\left|\sum_{i=1}^{j} \E\left(X_i\mathbf{1}(|X_i|> b_n)\right)\right|}{b_n}\to 0\ \text{ as }n\to\infty.
	\end{split}
\end{equation}
Combining \eqref{ui.eq05} and \eqref{ui.eq06} yields \eqref{ui.eq03}.
\end{proof}
	
	The following example shows that in Theorem \ref{thm.sufficiency.for.stochastic.domination.5},
	the assumption that $\{|X_{n}|^pL(|X_n|^p),n\ge 1\}$ is uniformly integrable cannot be weakened to 
	\begin{equation}\label{bound.moment.eq03}
		\sup_{n\ge 1}\E(|X_{n}|^pL(|X_n|^p))<\infty.
	\end{equation}
	
	\begin{example}
		Let $1\le p<2$, and let
		$\{X_n,n\ge1\}$ be a sequence of
		independent symmetric random variables with
		\[\mathbb{P}(X_n=0)=1-\frac{1}{n},\ \mathbb{P}(X_n=n^{1/p})=\mathbb{P}(X_n=-n^{1/p})=\frac{1}{2n},\ n\ge 1. \]
		Consider the case where the slowly varying function $L(x)\equiv 1$. Then it is clear that
		\[\sup_{n\ge 1}\E(|X_{n}|^pL(|X_n|^p))=\sup_{n\ge 1}\E(|X_{n}|^p)=1<\infty\]
		and
		\[\sup_{n\ge 1}\E(|X_{n}|^pL(|X_n|^p)\mathbf{1}(|X_n|^pL(|X_n|^p)>a))=\sup_{n\ge 1}\E(|X_{n}|^p\mathbf{1}(|X_n|>a^{1/p}))=1\]
		for all $a>0$. Therefore \eqref{bound.moment.eq03} is satisfied but $\{|X_{n}|^pL(|X_n|^p),n\ge 1\}$ is not uniformly integrable.
		For a real number $x$, let $\lfloor x \rfloor$ denote the greatest integer that is smaller than or equal to $x$. Then for $0<\varepsilon<1/4$ and
		for $n\ge 2$, we have
		\begin{equation}\label{boukhari.eq07}
			\begin{split}
				\sum_{i=1}^n \mathbb{P}(|X_i|>\varepsilon n^{1/p})&\ge 	\sum_{i=\lfloor n/2 \rfloor}^n \mathbb{P}(|X_i|>\varepsilon n^{1/p})\\
				&\ge 	\sum_{i=\lfloor n/2 \rfloor}^n \frac{1}{n}\ge \frac{1}{2}.
			\end{split}
		\end{equation}
		Combining \eqref{boukhari.eq05} and \eqref{boukhari.eq07} yields
		\[	\frac{\max_{1\le i\le n}|X_i|}{n^{1/p}}\overset{\mathbb{P}}{\nrightarrow} 0 .\]
		This implies that \eqref{ui.eq03} (with $b_n\equiv n^{1/p}$) fails.
	\end{example}

	\section{Proof of Theorem \ref{thm.main}}\label{sec:proofs}
	
	The following lemma plays an important role in the proof of Theorem \ref{thm.main}. It gives a general approach to the WLLN.
	In this lemma, we do not require any dependence structure. Throughout this section, we use the symbol $C$ to denote a universal positive constant 
	which is not necessarily the same in
	each appearance.

	\begin{lemma}\label{lem.general}
		Let $\{b_n,n\ge1\}$ be a nondecreasing sequence of positive numbers satisfying
		\begin{equation}\label{lem.eq01}
			\sup_{m\ge1}\frac{b_{2^m}}{b_{2^{m-1}}}<\infty.
		\end{equation}
		Let $\{X_n,n\ge1\}$ be a sequence of
		random variables which is stochastically dominated by a random variable $X$ and let $X_{i,n}=X_i\mathbf{1}(|X_i|\le b_n),\ n\ge 1,i\ge 1$. Assume that
		\begin{equation}\label{lem.eq03}
			\lim_{n\to\infty} n\mathbb{P}(|X|>b_n)=0.
		\end{equation}
		Then
		\begin{equation}\label{lem.eq07}
			\dfrac{\max_{1\le j\le n}\left|\sum_{i=1}^{j}\left(X_{i}-\E\left(X_{i,n}\right)\right)\right|}{b_{n}}\overset{\mathbb{P}}{\to}0\ \text{ as }n\to\infty
		\end{equation}
		if and only if 
		\begin{equation}\label{lem.eq05}
			\dfrac{\max_{1\le j< 2^n}\left|\sum_{i=1}^{j}\left(X_{i,2^n}-\E\left(X_{i,2^n}\right)\right)\right|}{b_{2^n}}\overset{\mathbb{P}}{\to}0\ \text{ as }n\to\infty.
		\end{equation}
	\end{lemma}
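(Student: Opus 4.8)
The plan is to prove both implications using two elementary reductions that require no dependence structure: a \emph{truncation replacement} powered by the tail hypothesis \eqref{lem.eq03} together with stochastic domination, and a \emph{comparison of normalizing constants} powered by the doubling condition \eqref{lem.eq01}. The implication \eqref{lem.eq07}$\Rightarrow$\eqref{lem.eq05} will be the easy one (restriction to the dyadic subsequence), while \eqref{lem.eq05}$\Rightarrow$\eqref{lem.eq07} carries the real work.

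First I would record the truncation device. For integers $N\ge n\ge 1$, the two partial-sum processes $\big(\sum_{i=1}^{j}X_i\big)_{j\le n}$ and $\big(\sum_{i=1}^{j}X_{i,N}\big)_{j\le n}$ coincide off the event $\bigcup_{i=1}^{n}\{|X_i|>b_N\}$, because $X_i=X_{i,N}$ whenever $|X_i|\le b_N$. Hence, by subadditivity and \eqref{RT1},
\[\mathbb{P}\Big(\max_{1\le j\le n}\Big|\sum_{i=1}^{j}(X_i-X_{i,N})\Big|>0\Big)\le\sum_{i=1}^{n}\mathbb{P}(|X_i|>b_N)\le n\,\mathbb{P}(|X|>b_N).\]
In every use below, $N$ will be a dyadic level comparable to $n$, so the right-hand side tends to $0$ by \eqref{lem.eq03}; thus replacing untruncated summands by their truncations at a comparable level is harmless for convergence in probability. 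With this in hand, \eqref{lem.eq07}$\Rightarrow$\eqref{lem.eq05} is immediate: I would specialize \eqref{lem.eq07} to $n=2^m$, note $\max_{1\le j<2^m}\le\max_{1\le j\le 2^m}$ so the normalized maximum still vanishes in probability, and apply the truncation device with $N=2^m$ to pass from $X_i$ to $X_{i,2^m}$ inside the sums.

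For the reverse implication, given $n$ I would choose $m=m(n)$ with $2^{m-1}\le n<2^m$; then \eqref{lem.eq01} gives $b_n\le b_{2^m}\le C\,b_{2^{m-1}}\le C\,b_n$, so $b_n$ and $b_{2^m}$ are comparable. Starting from the left-hand side of \eqref{lem.eq07}, I would use the truncation device with $N=2^m$ to replace each $X_i$ by $X_{i,2^m}$, enlarge the range of the maximum from $j\le n$ to $j<2^m$, and replace $b_n$ by the comparable $b_{2^m}$; the term that remains is then controlled by \eqref{lem.eq05} as $m\to\infty$. The only leftover is the centering mismatch $\E(X_{i,2^m})-\E(X_{i,n})=\E\big(X_i\mathbf{1}(b_n<|X_i|\le b_{2^m})\big)$, and this is the step I expect to be the main obstacle.

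I would dispose of it deterministically, exploiting that the truncation lies between two comparable levels. On the event $\{b_n<|X_i|\le b_{2^m}\}$ we have $|X_i|\le b_{2^m}$, so by \eqref{RT1},
\[\big|\E(X_{i,2^m})-\E(X_{i,n})\big|\le b_{2^m}\,\mathbb{P}(|X_i|>b_n)\le b_{2^m}\,\mathbb{P}(|X|>b_n)\le C\,b_n\,\mathbb{P}(|X|>b_n),\]
whence, summing the $j\le n$ terms,
\[\frac{1}{b_n}\max_{1\le j\le n}\Big|\sum_{i=1}^{j}\big(\E(X_{i,2^m})-\E(X_{i,n})\big)\Big|\le C\,n\,\mathbb{P}(|X|>b_n)\to 0\]
by \eqref{lem.eq03}. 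It is precisely here that \eqref{lem.eq01} is indispensable: the bound $b_{2^m}\le C\,b_n$ keeps the truncated first moment comparable to $b_n\,\mathbb{P}(|X|>b_n)$, whereas truncating all the way to $+\infty$ would leave the quantity $\tfrac{n}{b_n}\E(|X|\mathbf{1}(|X|>b_n))$, which need not vanish under \eqref{lem.eq03} alone. Combining the three contributions yields \eqref{lem.eq07}, completing the equivalence.
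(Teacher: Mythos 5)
Your proposal is correct and follows essentially the same route as the paper's own proof: the same union-bound truncation replacement (the paper's first step), the same use of the doubling condition \eqref{lem.eq01} to compare $b_n$ with the dyadic normalizers, and the same deterministic bound $b_{2^m}\mathbb{P}(|X_i|>b_n)\le C\,b_n\,\mathbb{P}(|X|>b_n)$ for the centering mismatch, which is exactly the paper's quantity $K_m$. The only difference is organizational—the paper first upgrades \eqref{lem.eq05} to a statement about the untruncated summands and then adjusts centerings, while you truncate the summands directly before invoking \eqref{lem.eq05}—but the three estimates involved are identical.
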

	
	\begin{proof}
		We firstly prove under \eqref{lem.eq03} that
		\begin{equation}\label{lem.eq08}
			\begin{split}
				\dfrac{\max_{1\le j< 2^n}\left|\sum_{i=1}^{j}\left(X_i-X_{i,2^n}\right)\right|}{b_{2^n}}\overset{\mathbb{P}}{\to}0\ \text{ as }n\to\infty.
			\end{split}
		\end{equation}
		To see this, let $\varepsilon>0$ be arbitrary. Then
		\begin{equation*}
			\begin{split}
				\mathbb{P}\left(\dfrac{\max_{1\le j< 2^n}\left|\sum_{i=1}^{j}\left(X_i-X_{i,2^n}\right)\right|}{b_{2^n}}>\varepsilon\right)
				&\le \mathbb{P}\left(\bigcup_{i=1}^{2^n-1}\left(X_i\not=X_{i,2^n}\right) \right)\\
				&\le \sum_{i=1}^{2^n-1}\mathbb{P}\left(X_i\not=X_{i,2^n}\right)\\
				&\le 2^n \mathbb{P}\left(|X|>b_{2^n}\right)\to 0 \text{ as }n\to\infty \text{ (by \eqref{lem.eq03})}
			\end{split}
		\end{equation*}
		thereby proving \eqref{lem.eq08} since $\varepsilon>0$ is arbitrary.
		
		Next, assume that \eqref{lem.eq07} holds. Then
		\begin{equation}\label{lem.eq06}
			\dfrac{\max_{1\le j< 2^n}\left|\sum_{i=1}^{j}\left(X_{i}-\E\left(X_{i,2^n}\right)\right)\right|}{b_{2^n}}\overset{\mathbb{P}}{\to}0\ \text{ as }n\to\infty.
		\end{equation}
		Combining \eqref{lem.eq08} and \eqref{lem.eq06}, we obtain \eqref{lem.eq05}.
		
		Finally, assume that \eqref{lem.eq05} holds.
		It follows from \eqref{lem.eq05} and 
		\eqref{lem.eq08} that
		\begin{equation}\label{lem.eq09}
			\begin{split}
				\dfrac{\max_{1\le j< 2^n}\left|\sum_{i=1}^{j}\left(X_{i}-\E\left(X_{i,2^n}\right)\right)\right|}{b_{2^n}}\overset{\mathbb{P}}{\to}0\ \text{ as }n\to\infty.
			\end{split}
		\end{equation}
		Now, for $m\ge 1$, set
		\[K_m=\max_{2^{m-1}\le n<2^m}\frac{\max_{1\le j< 2^m}\left|\sum_{i=1}^j \E(X_{i,2^m}-X_{i,n})\right|}{b_{2^{m-1}}}.\]
		Then by using \eqref{lem.eq01}, \eqref{lem.eq03} and the stochastic domination assumption, we have
		\begin{equation}\label{lem.eq11}
			\begin{split}
				K_m&\le \dfrac{\sum_{i=1}^{2^m}  \mathbb{E}\left(|X_i|\mathbf{1}(b_{2^{m-1}}<|X_{i}|\le b_{2^{m}})\right)}{b_{2^{m-1}}}\\
				&\le \dfrac{\sum_{i=1}^{2^m} b_{2^m} \mathbb{P}(|X_{i}|>b_{2^{m-1}})}{b_{2^{m-1}}}\\
				&\le C 2^m \mathbb{P}(|X|>b_{2^{m-1}})\to 0 \text{ as } m\to\infty.
			\end{split}
		\end{equation}
		For $n\ge 1$, let $m\ge 1$ be such that $2^{m-1}\le n<2^{m}$. Then by \eqref{lem.eq01}, \eqref{lem.eq09} and \eqref{lem.eq11}), we have
		\begin{equation*}\label{lem.eq13}
			\begin{split}
				\frac{\max_{1\le j\le n}\left|\sum_{i=1}^j (X_i-\E(X_{i,n}))\right|}{b_n}&\le \dfrac{\max_{1\le j< 2^m}\left|\sum_{i=1}^j (X_i-\E(X_{i,2^m}))\right|}{b_{2^{m-1}}}\\
	&\qquad+\dfrac{\max_{1\le j< 2^m}\left|\sum_{i=1}^j \E(X_{i,2^m}-X_{i,n})\right|}{b_{2^{m-1}}}\\
				&\le \dfrac{C\max_{1\le j<2^m}\left|\sum_{i=1}^j (X_i-\E(X_{i,2^m}))\right|}{b_{2^{m}}}+K_m\\
				&\to 0 \text{ as } m\to\infty
			\end{split}
		\end{equation*}
		thereby establishing \eqref{lem.eq07}.
		
	\end{proof}
	
	\begin{proof}[Proof of Theorem \ref{thm.main}]
		Let
		\[X_{i,n}=X_i\mathbf{1}(|X_i|\le b_n),\ n\ge 1,i\ge 1.\] 
It is clear that the sequence $\{b_n,n\ge1\}$ satisfies \eqref{lem.eq01}. By Lemma \ref{lem.general}, it suffices to show that
		\begin{equation}\label{eq.main17}
			\begin{split}
				\dfrac{\max_{1\le j< 2^n}\left|\sum_{i=1}^{j}\left(X_{i,2^n}-\E\left(X_{i,2^n}\right)\right)\right|}{b_{2^n}}\overset{\mathbb{P}}{\to}0\ \text{ as }n\to\infty.
			\end{split}
		\end{equation}
		For $m\ge 0,$ set $S_{0,m}=0$ and
		\[S_{j,m}=\sum_{i=1}^j (X_{i,2^m}-\E X_{i,2^m}),\ j\ge 1.\]
		We will use techniques developed by Rio \cite{rio1995vitesses} (see also \cite{thanh2020theBaum} for the case of regular varying normalizing sequences) as follows.
		For $n\ge1$, $1\le j<2^n$ and for $0\le m\le n$, let $k=\lfloor j/2^m\rfloor $ be the greatest integer which is
		less than or equal to $j/2^m$. Then $0\le k<2^{n-m}$ and $k 2^m\le j< (k+1)2^m$. Let $j_m = k 2 ^m$, and
		\[Y_ {i, m} =\left|X_ {i, 2^m}-X_{i, 2^{m-1}}\right|-\E\left(\left|X_{i, 2^m}-X_{i, 2^{m-1}}\right|\right).\]
		Then we can show that (see \cite[Page 1236]{thanh2020theBaum})
		\begin{equation}\label{28}
			\begin{split}
				\max_{1\le j< 2^n}\left|S_{j,n}\right|
				&\le \sum_{m=1}^n \max_{0\le k<2^{n-m}}\left|\sum_{i=k2^m+1}^{k2^m+2^{m-1}}\left(X_{i, 2^{m-1}}-\E(X_{i, 2^{m-1}})\right)\right| +\sum_{m=1}^n \max_{0\le k<2^{n-m}}\left|\sum_{i=k2^m+1}^{(k+1)2^m}Y_{i,m}\right|\\
				&\quad +\sum_{m=1}^n 2^{m+1}
				b_{2^m}\mathbb{P}\left(|X|>b_{2^{m-1}}\right).
			\end{split}
		\end{equation}
		By \eqref{eq.main01}, we have
		\begin{equation}\label{29}
			2^{m+1} \mathbb{P}\left(|X|>b_{2^{m-1}}\right)\to 0 \text{ as }m\to\infty.
		\end{equation}
			It follows from Karamata's theorem (see, e.g., \cite[p. 30]{bingham1989regular}) that
	\begin{equation}\label{rem08}
		\sum_{k=1}^n \alpha^kL\left(\beta^k\right)\le C\alpha^nL(\beta^n)\text{ for all }\alpha>1,\beta\ge 1.
	\end{equation}
By using \eqref{29}, \eqref{rem08} and Toeplitz's lemma, we have
		\begin{equation}\label{34}
			\begin{split}
&\lim_{n\to \infty}\dfrac{\sum_{m=1}^n b_{2^m} 2^{m+1} \mathbb{P}\left(|X|>b_{2^{m-1}}\right)}{b_{2^n}}\\
&=\lim_{n\to \infty}\dfrac{\sum_{m=1}^n 2^{m/p} L(2^{m}) 2^{m+1} \mathbb{P}\left(|X|>b_{2^{m-1}}\right)}{2^{n/p} L(2^{n})}=0.
			\end{split}
		\end{equation}
		Let $\varepsilon_1>0$ be arbitrary, and let
		$a$ and $b$ be positive constants satisfying
		\[1/2<a<1/p,\ a+b=1/p.\]
		For $n\ge 1$, $0\le m\le n$,  let 
		\begin{equation}\label{eq.main24}
			\lambda_{m,n}=\varepsilon_{1}2^{bm}  2^{an}L(2^{n}).
		\end{equation}
An elementary calculation shows (see \cite[p. 1236]{thanh2020theBaum})
		\begin{equation}\label{30}
			\begin{split}
				\sum_{m=1}^n\lambda_{m,n}\le \dfrac{ 2^b \varepsilon_1 b_{2^{n}}}{2^b-1}:=C_1(b)\varepsilon_1 b_{2^{n}}.
			\end{split}
		\end{equation}
		By  \eqref{28}, \eqref{34}, the proof of \eqref{eq.main17} is completed if we show that
		\begin{equation}\label{38}
			I_n:=\mathbb{P}\left(\sum_{m=1}^n \max_{0\le k<2^{n-m}}\left|\sum_{i=k2^m+1}^{(k+1)2^m}Y_{i,m}\right|\ge C_1(b)\varepsilon_1 b_{2^{n}}\right)\to 0 \text{ as }n\to\infty,
		\end{equation}
		and
		\begin{equation}\label{39}
			J_n:=\mathbb{P}\left(\sum_{m=1}^n \max_{0\le k<2^{n-m}}\left|\sum_{i=k2^m+1}^{k2^m+2^{m-1}}\left(X_{i, 2^{m-1}}-\E(X_ {i, 2^{m-1}})\right)\right| \ge C_1(b)\varepsilon_1 b_{2^{n}}\right)\to 0 \text{ as }n\to\infty.
		\end{equation}
We note that for each $m\ge 1$, $Y_{i,m},i\ge1$ are mean $0$ and pairwise independent random variables. 
Therefore
		\begin{equation}\label{eq.main26}
			\begin{split}
				I_n&\le \sum_{m=1}^n\mathbb{P}\left(\max_{0\le k<2^{n-m}}\left|\sum_{i=k2^m+1}^{(k+1)2^m}Y_{i,m}\right|\ge \lambda_{m,n}\right)\text{ (by \eqref{30})}\\
				&\le \sum_{m=1}^n  \lambda_{m,n}^{-2} \E\left(\max_{0\le k<2^{n-m}}\left|\sum_{i=k2^m+1}^{(k+1)2^m}Y_{i,m}\right|\right)^2\text{ (by Markov's inequality)}\\
				&\le \sum_{m=1}^n \lambda_{m,n}^{-2} \sum_{k=0}^{2^{n-m}-1}\E\left(\sum_{i=k2^m+1}^{(k+1)2^m}Y_{i,m}\right)^2\\
				&= \sum_{m=1}^n \lambda_{m,n}^{-2} \sum_{k=0}^{2^{n-m}-1}\sum_{i=k2^m+1}^{(k+1)2^m}\E\left(Y_{i,m}\right)^2\\
				&\le \sum_{m=1}^n \lambda_{m,n}^{-2} \sum_{k=0}^{2^{n-m}-1}\sum_{i=k2^m+1}^{(k+1)2^m}\E\left(X_ {i, 2^m}-X_{i, 2^{m-1}}\right)^2\\
				&\le \sum_{m=1}^n 2^n \lambda_{m,n}^{-2}b_{2^m}^2\mathbb{P}(|X|>b_{2^{m-1}})\text{ (by the stochastic domination assumption)}\\
				&=\varepsilon_{1}^{-2}\frac{1}{2^{n (2a-1)}L^{2}(2^{n})}\left(\sum_{m=1}^n 2^{m(2a-1)}L^2(2^m)2^m\mathbb{P}(|X|>b_{2^{m-1}})\right)\text{ (by \eqref{eq.main24})}\\
				&\to 0 \text{ as } n\to\infty \text{ (by noting $2a-1>0$ and using \eqref{29}, \eqref{rem08}, and Toeplitz's lemma)}.
			\end{split}
		\end{equation}
Similarly,
		\begin{equation}\label{eq.main27}
			\begin{split}
				J_n
				&\le \sum_{m=1}^n\mathbb{P}\left(\max_{0\le k<2^{n-m}}\left|\sum_{i=k2^m+1}^{k2^m+2^{m-1}}\left(X_ {i, 2^{m-1}}-\E(X_ {i, 2^{m-1}})\right)\right| \ge \lambda_{m,n}\right)\\
				&\le  \sum_{m=1}^n 2^n \lambda_{m,n}^{-2}\left(\E X^2\mathbf{1} (|X|\le b_{2^{m-1}})+b_{2^{m-1}}^2\mathbb{P}(|X|>b_{2^{m-1}})\right)\\
				&=  \sum_{m=1}^n 2^n \lambda_{m,n}^{-2}\E X^2\mathbf{1} (|X|\le b_{2^{m-1}})+o(1),
			\end{split}
		\end{equation}
	where we have applied \eqref{eq.main26} in the final step. By using integration by parts, and proceeding in a similar manner as the last two lines of \eqref{eq.main26}, we have
		\begin{equation}\label{eq.main28}
			\begin{split}
&\sum_{m=1}^n 2^n \lambda_{m,n}^{-2}\E X^2\mathbf{1} (|X|\le b_{2^{m-1}})
\le \sum_{m=1}^n 2^n \lambda_{m,n}^{-2}\int_{0}^{b_{2^{m-1}}}2x\mathbb{P}(|X|>x)\dx x\\
				&\le  \varepsilon_{1}^{-2} 2^{n (1-2a)}L^{-2}(2^{n})\sum_{m=1}^n \left(2^{- 2mb}\sum_{k=1}^m \int_{b_{2^{k-1}}}^{b_{2^k}} 2x \mathbb{P}(|X|>x)\dx x +2^{- 2mb}\int_{0}^{b_1}2x\dx x\right)\\
				&\le \varepsilon_{1}^{-2}  2^{n (1-2a)}L^{-2}(2^{n}) \left(\sum_{k=1}^{n}\left(\sum_{m=k}^n 2^{-2bm}\right) b_{2^{k}}^2 \mathbb{P}\left(|X|>b_{2^{k-1}}\right)+\sum_{m=1}^n2^{- 2mb}b_{1}^2\right)\\
				&\le \dfrac{C}{2^{n (2a-1)}L^{2}(2^{n})}\left(\sum_{k=1}^{n} 2^{k(2a-1)} L^2(2^k) 2^k\mathbb{P}\left(|X|>b_{2^{k-1}}\right)+1\right)\to 0 \text{ as } n\to\infty.
			\end{split}
		\end{equation}
		Combining \eqref{eq.main26}--\eqref{eq.main28}, we obtain \eqref{38} and \eqref{39} thereby completing the proof of \eqref{eq.main17}. 
		
	\end{proof}
	
	\section{Concluding remarks}
	
	This paper establishes WLLNs for maximal partial sums of pairwise independent random variables
	without using the Kolmogorov maximal inequality.
	The method can be easily adapted to 
	dependent random variables. We have the following result:
	
	\begin{theorem}\label{thm.extension}
		Let $\{X_n,n\ge1\}$
		be a sequence of random variables and let $p$, $L(\cdot)$ and $b_n$ be as in Theorem \ref{thm.main}. Assume that there exists a constant $C$ such that
		for all nondecreasing functions $f_i$, $i\ge1$ we have
		\begin{equation}\label{eq:bound_var_00}
			\var\left(\sum_{i=k+1}^{k+\ell}f_{i}(X_{i})\right)\le C \sum_{i=k+1}^{k+\ell}\var(f_{i}(X_{i})),\ k\ge0,\ell\ge 1,
		\end{equation}
		provided the variances exist.
		If $\{X_n, \, n \geq 1\}$ is stochastically dominated by a random variable $X$ such that
		\eqref{eq.main01} is satisfied, then we obtain WLLN \eqref{eq.main03}.
	\end{theorem}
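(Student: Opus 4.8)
The plan is to re-run the proof of Theorem~\ref{thm.main} almost verbatim, isolating the single structural role played there by pairwise independence and supplying \eqref{eq:bound_var_00} in its place. First I would observe that the two ingredients of that proof which do not refer to the joint law carry over unchanged: the reduction furnished by Lemma~\ref{lem.general} uses only stochastic domination and \eqref{lem.eq03}, so it again reduces the assertion to the dyadic statement \eqref{eq.main17}; and the pathwise inequality \eqref{28}, together with the deterministic bounds \eqref{29} and \eqref{34} for its remainder term, holds for an arbitrary sequence. Thus the only places where independence was invoked are the passage to $\sum_i\E(Y_{i,m})^2$ in \eqref{eq.main26} and to $\sum_i\var(X_{i,2^{m-1}})$ in \eqref{eq.main27}, that is, the identity $\E(\sum_iZ_i)^2=\sum_i\E Z_i^2$ for centered summands. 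The purpose of \eqref{eq:bound_var_00} is exactly to replace this identity by $\var(\sum_iZ_i)\le C\sum_i\var(Z_i)$, at the cost of a harmless constant, \emph{provided} each $Z_i$ is a nondecreasing function of $X_i$.

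The obstacle is that the summands in \eqref{eq.main26} and \eqref{eq.main27} are not monotone functions of $X_i$: both the hard truncation $x\mapsto x\mathbf{1}(|x|\le b)$ and the band function $x\mapsto|x|\mathbf{1}(b_{2^{m-1}}<|x|\le b_{2^m})$ fail to be monotone, so \eqref{eq:bound_var_00} cannot be applied to them directly. To remedy this I would first replace the hard truncation by the clipping $\phi_b(x)=\max(-b,\min(b,x))$, which is nondecreasing. Writing $\bar X_{i,n}=\phi_{b_n}(X_i)$, the discrepancies $|X_{i,n}-\bar X_{i,n}|=b_n\mathbf{1}(|X_i|>b_n)$ and $|\E X_{i,n}-\E\bar X_{i,n}|\le b_n\Pro(|X|>b_n)$ are, after summing over a dyadic block and dividing by $b_{2^n}$, both dominated in probability by quantities of order $2^n\Pro(|X|>b_{2^n})$, hence negligible by \eqref{eq.main01} and stochastic domination. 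This reduces \eqref{eq.main17} to the same statement for the clipped variables, to which I would apply the decomposition \eqref{28} with $X_{i,2^m}$ replaced by $\bar X_{i,m}=\phi_{b_{2^m}}(X_i)$ and $Y_{i,m}$ replaced by $\bar Y_{i,m}=|\bar X_{i,m}-\bar X_{i,m-1}|-\E|\bar X_{i,m}-\bar X_{i,m-1}|$; the telescoping $\bar X_{i,n}=\phi_{b_{2^{m-1}}}(X_i)+\sum_{l=m}^n(\bar X_{i,l}-\bar X_{i,l-1})$ makes this derivation identical to the one cited from \cite{thanh2020theBaum}, and its deterministic remainder is bounded exactly as in \eqref{34}.

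With clipping in force the two variance estimates go through. In the analogue of $J_n$ the summand is $\phi_{b_{2^{m-1}}}(X_i)-\E\phi_{b_{2^{m-1}}}(X_i)$; since $\phi_{b_{2^{m-1}}}$ is nondecreasing, \eqref{eq:bound_var_00} applies at once and reproduces the bound of \eqref{eq.main27} up to the factor $C$. The analogue of $I_n$ is the genuinely new point, since $\bar Y_{i,m}$ still carries an absolute value. Here I would note that $\psi_m(x):=\phi_{b_{2^m}}(x)-\phi_{b_{2^{m-1}}}(x)$ is an odd nondecreasing function and split $|\psi_m|=G_m+H_m$, where $G_m(x)=\psi_m(x)\mathbf{1}(x>0)$ is nondecreasing and $H_m(x)=-\psi_m(x)\mathbf{1}(x<0)$ is nonincreasing, with disjoint supports. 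Applying \eqref{eq:bound_var_00} to the nondecreasing functions $G_m$ and $-H_m$ separately yields
\[
\var\Big(\sum_i|\psi_m(X_i)|\Big)\le 2\var\Big(\sum_iG_m(X_i)\Big)+2\var\Big(\sum_iH_m(X_i)\Big)\le 2C\sum_i\E\,\psi_m(X_i)^2,
\]
where the last step uses $G_m^2+H_m^2=\psi_m^2$ by disjointness of supports; since $\E\,\psi_m(X_i)^2\le b_{2^m}^2\Pro(|X|>b_{2^{m-1}})$ by stochastic domination, this recovers the bound of \eqref{eq.main26} up to the constant $2C$.

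Constants aside, the remaining estimates are then copied verbatim: the choice of $\lambda_{m,n}$ in \eqref{eq.main24}, the splitting \eqref{30}, and the applications of Karamata's theorem \eqref{rem08} and Toeplitz's lemma in \eqref{eq.main26}--\eqref{eq.main28} do not see the dependence structure, so $I_n\to0$ and $J_n\to0$ follow as before, giving \eqref{eq.main17} for the clipped variables and hence \eqref{eq.main03}. I expect the crux to be precisely the monotone splitting $|\psi_m|=G_m+H_m$: it is what allows the covariance-type hypothesis \eqref{eq:bound_var_00}, stated only for nondecreasing functions, to absorb the non-monotone absolute-value increments that are intrinsic to Rio's maximal decomposition, while the passage from hard truncation to clipping is the routine device that makes all the relevant summands monotone in the first place.
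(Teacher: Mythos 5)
Your proposal is correct, and it reaches the goal by a genuinely different device than the one the paper sketches. The paper's own proof outline keeps the same skeleton you use (Lemma \ref{lem.general}, the Rio decomposition \eqref{28}, and the estimates \eqref{eq.main26}--\eqref{eq.main28}), but it resolves the monotonicity obstruction at the outset by writing $X_n=X_n^{+}-X_n^{-}$ and proving the result for nonnegative sequences; it then truncates with $X_{i,n}=X_i\mathbf{1}(X_i\le b_n)+b_n\mathbf{1}(X_i>b_n)=\min(X_i,b_n)$. For nonnegative variables this single move makes \emph{every} function in sight nondecreasing: the truncation itself, and also the scale increments $\min(X_i,b_{2^m})-\min(X_i,b_{2^{m-1}})\ge 0$, so the absolute value in $Y_{i,m}$ is vacuous and \eqref{eq:bound_var_00} applies verbatim in both \eqref{eq.main26} and \eqref{eq.main27} --- no splitting is ever needed. (One does have to check, as is routine, that $\{X_n^{\pm}\}$ inherits \eqref{eq:bound_var_00} --- composing a nondecreasing $f_i$ with $x\mapsto x^{+}$ is nondecreasing, and with $x\mapsto x^{-}$ is nonincreasing, so its negative qualifies --- that $X$ still dominates, and that the two half-conclusions recombine into \eqref{eq.main03} because $X_i\mathbf{1}(|X_i|\le b_n)=X_i^{+}\mathbf{1}(X_i^{+}\le b_n)-X_i^{-}\mathbf{1}(X_i^{-}\le b_n)$.) You instead keep signed variables, replace hard truncation by the symmetric clipping $\phi_{b}$, show the discrepancy is negligible under \eqref{eq.main01}, and then confront the non-monotone increment $|\psi_m|$ head-on with the splitting $|\psi_m|=G_m+H_m$ into monotone pieces of disjoint support, at the cost of the harmless factor $2C$. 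Both routes are sound: the paper's reduction buys a shorter argument in which Theorem \ref{thm.main}'s proof transfers with essentially no new inequality, while your version avoids the positive/negative-part bookkeeping and makes explicit --- via the $G_m+H_m$ lemma --- exactly how a variance hypothesis stated only for nondecreasing functions absorbs the absolute-value increments intrinsic to Rio's decomposition, which is arguably the more transparent explanation of why the theorem is true.
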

	
	Theorem \ref{thm.extension} can be proved by assuming that $X_n\ge 0$, $n\ge1$ since we can use identity $X_n=X_{n}^{+}-X_{n}^{-}$ 
	in the general case. We then use truncation $X_{i,n}=X_i\mathbf{1}(X_i\le b_n)+b_n\mathbf{1}(X_i>b_n),\ n\ge 1,i\ge 1$
	(to ensure that the truncated sequence
	 $\{X_{n,i},i\ge 1\}$ satisfies
\eqref{eq:bound_var_00}), and modify
the arguments given in
the proofs of Lemma \ref{lem.general} and Theorem \ref{thm.main}
	accordingly. The details are straightforward and, hence, are omitted.
	
	Many dependence structures satisfy \eqref{eq:bound_var_00}, including $m$-pairwise negative dependence, extended negative dependence, $\varphi$-mixing, etc (see, e.g., \cite{dzung2021thedependent,rio1995vitesses}).
	
It is obvious that \eqref{eq.main03} implies \eqref{eq.gut.main03}.	For the i.i.d case and $1\le p<2$, Kruglov \cite[Theorem 2]{kruglov2011generalization} proved that
\eqref{eq.main03} and \eqref{eq.gut.main03} are equivalent. It is an open problem as to whether or not this also holds for the pairwise i.i.d. case.
	
	\section*{Acknowledgements}
The author would like to thank the Editor for offering useful comments and suggestions which enabled him to improve the
paper. The author is also grateful to Professor Fakhreddine Boukhari for his interest in this work.
	
	\bibliographystyle{amsplain}
	\bibliography{mybib}

\providecommand{\bysame}{\leavevmode\hbox to3em{\hrulefill}\thinspace}
\providecommand{\MR}{\relax\ifhmode\unskip\space\fi MR }
\providecommand{\MRhref}[2]{%
  \href{http://www.ams.org/mathscinet-getitem?mr=#1}{#2}
}
\providecommand{\href}[2]{#2}
\begin{thebibliography}{10}

\bibitem{anh2021marcinkiewicz}
Vu~Thi~Ngoc Anh, Nguyen Thi~Thanh Hien, L\^e~V\v{a}n Th\`{a}nh, and Vo~Thi~Hong
  Van, \emph{The {Marcinkiewicz--Zygmund}-type strong law of large numbers with
  general normalizing sequences}, Journal of Theoretical Probability
  \textbf{34} (2021), no.~1, 331--348,
  https://link.springer.com/article/10.1007/s10959-019-00973-2.

\bibitem{bingham1989regular}
Nicholas~H Bingham, Charles~M Goldie, and Jef~L Teugels, \emph{Regular
  variation}, vol.~27, Cambridge University Press, 1989.

\bibitem{bojanic1971slowly}
Ranko Bojanic and Eugene Seneta, \emph{Slowly varying functions and asymptotic
  relations}, Journal of Mathematical Analysis and Applications \textbf{34}
  (1971), no.~2, 302--315,
  https://www.sciencedirect.com/science/article/pii/0022247X71901144.

\bibitem{boukhari2021weak}
Fakhreddine Boukhari, \emph{On a weak law of large numbers with regularly
  varying normalizing sequences}, Journal of Theoretical Probability. Online
  first (2021), 1--12,
  https://link.springer.com/article/10.1007/s10959-021-01120-6.

\bibitem{chandra2014extension}
Tapas~Kumar Chandra, \emph{{On an extension of the weak law of large numbers of
  Kolmogorov and Feller}}, Stochastic Analysis and Applications \textbf{32}
  (2014), no.~3, 421--426,
  https://www.tandfonline.com/doi/abs/10.1080/07362994.2014.886263.

\bibitem{bruijn1959pairs}
Nicolaas~Govert de~Bruijn, \emph{Pairs of slowly oscillating functions
  occurring in asymptotic problems concerning the laplace transform.}, Nieuw
  Archief voor Wiskunde \textbf{7} (1959), no.~3, 20--26.

\bibitem{dzung2021thedependent}
Nguyen~Chi Dzung and L\^{e}~V\v{a}n Th\`{a}nh, \emph{On the complete
  convergence for sequences of dependent random variables via stochastic
  domination conditions and regularly varying functions theory},
  arXiv2107.12690 Preprint (2021), 1--18, https://arxiv.org/abs/2107.12690.

\bibitem{gut2004extension}
Allan Gut, \emph{{An extension of the Kolmogorov--Feller weak law of large
  numbers with an application to the St. Petersburg game}}, Journal of
  Theoretical Probability \textbf{17} (2004), no.~3, 769--779,
  https://link.springer.com/article/10.1023/B:JOTP.0000040299.15416.0c.

\bibitem{hien2015weak}
Nguyen Thi~Thanh Hien and L\^e~V\v{a}n Th\`{a}nh, \emph{On the weak laws of
  large numbers for sums of negatively associated random vectors in {Hilbert}
  spaces}, Statistics and Probability Letters \textbf{107} (2015), 236--245,
  https://www.sciencedirect.com/science/article/abs/pii/S016771521500320X.

\bibitem{kruglov2011generalization}
Victor~M Kruglov, \emph{A generalization of weak law of large numbers},
  Stochastic analysis and applications \textbf{29} (2011), no.~4, 674--683,
  https://www.tandfonline.com/doi/abs/10.1080/07362994.2011.581099.

\bibitem{rio1995vitesses}
Emmanuel Rio, \emph{Vitesses de convergence dans la loi forte pour des suites
  d{\'e}pendantes {(Rates of convergence in the strong law for dependent
  sequences)}}, {Comptes Rendus de l'Acad{\'e}mie des Sciences}. {S{\'e}rie I,
  Math{\'e}matique} \textbf{320} (1995), no.~4, 469--474,
  {https://gallica.bnf.fr/ark:/12148/bpt6k5772773j/f85.item}.

\bibitem{rosalsky2006weak}
Andrew Rosalsky and L\^e~V\v{a}n Th\`{a}nh, \emph{{On the weak law of large
  numbers with random indices for randomly weighted row sums from arrays of
  random elements in Banach spaces}}, Journal of Probability and Statistical
  Science \textbf{4} (2006), no.~2, 123--135.

\bibitem{rosalsky2009weak}
\bysame, \emph{Weak laws of large numbers for double sums of independent random
  elements in {Rademacher type $p$ and stable type $p$ Banach spaces}},
  Nonlinear Analysis: Theory, Methods and Applications \textbf{71} (2009),
  no.~12, e1065--e1074,
  https://www.sciencedirect.com/science/article/abs/pii/S0362546X0900114X.

\bibitem{rosalsky2021note}
\bysame, \emph{A note on the stochastic domination condition and uniform
  integrability with applications to the strong law of large numbers},
  Statistics and Probability Letters \textbf{178} (2021), 109181,
  https://www.sciencedirect.com/science/article/abs/pii/S0167715221001437.

\bibitem{thanh2020theBaum}
L\^{e}~V\v{a}n Th\`{a}nh, \emph{On the {Baum--Katz} theorem for sequences of
  pairwise independent random variables with regularly varying normalizing
  constants}, Comptes Rendus Math\'{e}matique \textbf{358} (2020), no.~11--12,
  1231--1238,
  https://comptes-rendus.academie-sciences.fr/mathematique/articles/10.5802/crmath.139/.

\bibitem{thanh2022new}
\bysame, \emph{A new concept of stochastic domination and the laws of large
  numbers}, TEST, to appear. Available at
  \url{https://arxiv.org/abs/2207.06715} (2022), 1--26.

\end{thebibliography}
\end{document}